\documentclass[12pt]{amsart}
\usepackage{latexsym,amssymb,amsfonts,amsmath, amscd, euscript, MnSymbol}
\usepackage{pdfsync}
\usepackage{enumerate}
\usepackage[pdftex]{graphicx}
\usepackage{wrapfig}
\usepackage{epsfig}
\usepackage{pgf,tikz}
\usetikzlibrary{arrows,automata}
\usetikzlibrary{positioning,calc}
\usepackage{amssymb, amsmath, amsthm, enumerate}
\usepackage{mathrsfs}
\usepackage{url}
\usepackage{epstopdf}
\usepackage{enumerate}
\usepackage{verbatim}
\usepackage{float}
\usepackage{layout}
\usepackage[top=3cm, bottom=3cm, left=2cm, right=2cm]{geometry}
\usepackage{mathtools}

\newtheorem{theorem}{Theorem}[section]

\newtheorem{corollary}[theorem]{Corollary}

\newtheorem{definition}[theorem]{Definition}
\newtheorem{example}[theorem]{Example}
\newtheorem{lemma}{Lemma}[section]

\newtheorem{proposition}[theorem]{Proposition}

\newtheorem{remark}[theorem]{Remark}

\newtheorem{question}[theorem]{Question}
%%%%%%%%%%%%%%%%%%%%%%%%%%%%%%%%%%%%%%%%%
\usepackage{siunitx}
\usepackage{graphics,mwe}% for image

\newsavebox{\hold}
\newlength{\holdht}

%%%%%%%%%%%%%%%%%%%%%%%%%%%%%%%%%%%%%
\begin{document}

\title[Best proximity points in ultrametric spaces]{Best proximity points in ultrametric spaces}
\author[K. Chaira and S. Lazaiz]{K. Chaira$^1$ and S. Lazaiz$^{2}$}
\address{$^1$L3A Laboratory\\ Department of Mathematics and Computer Sciences\\ Faculty of Sciences Ben M'sik\\ University of Hassan II Casablanca\\}
\email{chaira\_karim@yahoo.fr}
\address{$^2$LaSMA Laboratory\\ Department of Mathematics\\ Faculty of Sciences Dhar El Mahraz\\ University Sidi Mohamed Ben Abdellah, Fes, Morocco\\}
\email{samih.lazaiz@usmba.ac.ma}
\subjclass[2010]{Primary: 37C25. Secondary: 54E50; 41A50}

\keywords{Best proximity points; best proximity pairs; fixed points; nonexpansive mappings; ultrametric spaces} 

\begin{abstract}
In the present paper, we study the existence of best proximity pair in ultrametric spaces. We show, under suitable assumptions, that the proximinal pair $(A,B)$ has a best proximity pair. As a consequence we generalize a well known best approximation result and we derive some fixed point theorems. Moreover, we provide examples to illustrate the obtained results.

% we show that the proximinal pair $(A,B)$ has a best proximity pair if $A$ is spherically complete and diameter of $B$ is less than the distance between $A$ and $B$. 
\end{abstract}

\maketitle

\section{Introduction and preliminaries}

Let $T:A\rightarrow B$ be a map where $A$ and $B$ are two nonempty subsets of a metric space $M$. Recall that a point $x\in M$ is called a fixed point of a $T$ if $Tx=x$. It is known that such an equation does not always have a solution. However, in the absence of the fixed point (for example if $A\cap B=\varnothing$), it is possible to consider the problem of finding a point $x\in A$ that is as close as possible to $Tx$ in $B$; i.e., to minimize the quantity $d(x,Tx)$ over $A$. This type of problem is considered as part of approximation theory, more specifically best approximation point results. 

\begin{definition}\cite{ref12}
A subset $A\subset M$ is said to be proximinal if given any $x\in M$ there exists $a_0\in A$ such that 
$$d(x,a_0)=dist(x,A)=\inf\{d(x,z):\; z\in A\}.$$
Such an $a_0$, if it exists, is called a best approximation to $x$ in $A$.
\end{definition}

In the literature, some positive results concerning the existence of best approximation point were given whether in the archimedean or non-archimedean spaces, for more details see \cite{ref9,ref10,ref11}.

\medskip 
However, proximinality results guaranteed the existence of best approximation point without being optimal, for this reason the authors in \cite{ref12} introduce the notion of best proximity pair which gives both the existence and the optimality. Recall that for a bounded set $B$ in $M$, $\delta(B)$ stands for the diameter of $B$, i.e., 
\[
\delta(B):=\sup\{d(x,y)\; : \;x,y\in B \},
\]
and $dist(A,B)=\inf\{d(a,b):\;a\in A\;\text{and}\;b\in B\}$.

\begin{definition}\cite[Definition 1.1.]{ref12}
Let $M$ be a metric space and let $A$ and $B$ be nonempty subsets of $M$. Let
$$A_0=\{x\in A:\; d(x, y)=dist(A, B)\; \text{for some}\; y\in B\};$$
$$B_0=\{y\in B:\; d(x, y)=dist(A, B)\; \text{for some}\; x\in A\}.$$

A pair $(x, y)\in A_0 \times B_0$ for which $d(x, y)=dist(A, B)$ is called a best proximity pair for $A$ and $B$.
\end{definition} 

Motivated by the preceding definition, the authors in \cite{ref12} raised the following question:

\begin{question}\label{Q1}
Let $(A,B)$ be proximinal pair of $(M,d)$. Does there exists a best proximity pair $(a,b)\in A_0\times B_0$ ? If so, does the pair $(A_0,B_0)$ also proximinal ?
\end{question}

If this question has an affirmative answer, this inspires to formulate the following:

\begin{question}\label{Q2}
Given a mapping $T:A\cup B \rightarrow A\cup B$ with $T(A)\subset A$ and $T(B)\subset B$ (also called noncyclic mapping), does there exists an ordered pair $(a,b)\in A\times B$ such that 
$$Ta=a, \quad Tb=b\quad \text{and}\quad d(a,b)=dist(A,B).$$
\end{question}

There is an extensive literature contains partial affirmative answers to these two questions in the context of metric spaces and linear spaces (see \cite{ref13,ref14,ref15}). To the best knowledge of the authors, this is the first time these questions are considered in the case of ultrametric spaces. 

\medskip
Recall that an ultrametric space is a metric space $M$ in which strong triangle inequality viz. $d(x,y) \leq \max\{d(x,z),d(z, y)\}$ is satisfied for all $x,y,z \in M$. 

\begin{definition}\cite{ref22}
An ultrametric space $(M,d)$ is called spherically complete if each nested sequence of balls $B_1 \supset  B_2 \supset\cdots$  has a non-empty intersection.
\end{definition}

In \cite{ref8} the authors prove the following result.

\begin{theorem}\label{KS1}
  Let $A$ be a spherically complete subspace of an ultrametric space $(M,d)$. Then $A$ is proximinal in $M$.
\end{theorem}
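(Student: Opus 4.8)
The plan is to show that for an arbitrary $x \in M$, the infimum $\operatorname{dist}(x,A)$ is attained at some point of $A$, using spherical completeness together with the strong triangle inequality. First I would dispose of the trivial case $x \in A$ (where $x$ itself is a best approximation) and otherwise set $r := \operatorname{dist}(x,A) = \inf\{d(x,a) : a \in A\} \geq 0$. For each real $\rho > r$ the set $A \cap B(x,\rho)$ is nonempty, and I want to produce a nested family of closed balls whose intersection lands inside $A$ and realizes the distance $r$.

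The key move exploits a rigidity peculiar to ultrametric spaces: if $a, a' \in A$ with $d(x,a), d(x,a') \leq \rho$, then the ball $B(a,\rho)$ (inside $A$, say) equals $B(a',\rho)$, because every ball in an ultrametric space is "centered at each of its points" — indeed $d(a,a') \leq \max\{d(a,x), d(x,a')\} \leq \rho$, so $a' \in B(a,\rho)$ and hence $B(a,\rho) = B(a',\rho)$. Thus, choosing a sequence $\rho_n \downarrow r$ and picking $a_n \in A$ with $d(x,a_n) < \rho_n$, the balls $B_n := B(a_n,\rho_n) \cap A$ (or the balls $B(a_n, \rho_n)$ taken in the subspace $A$) form a nested decreasing sequence: $B_{n+1} \subseteq B_n$ follows since $d(a_n, a_{n+1}) \leq \max\{d(a_n,x), d(x,a_{n+1})\} \leq \rho_n$, which forces $B(a_{n+1},\rho_{n+1}) \subseteq B(a_n,\rho_n)$. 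I would carry out: (1) the elementary ultrametric ball lemma ("every point is a center"), (2) construction of the nested sequence, (3) invoking spherical completeness of $A$ to get $a_0 \in \bigcap_n B_n \subseteq A$, and (4) checking $d(x,a_0) \leq \rho_n$ for all $n$, hence $d(x,a_0) \leq r$, so $d(x,a_0) = r = \operatorname{dist}(x,A)$.

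One technical point to handle carefully is whether the relevant balls should be closed or open, and whether the nested sequence should be indexed so that the radii strictly decrease to $r$ or are allowed to be constant; if $r$ is itself attained along the way there is nothing to prove, and if not, one takes strictly decreasing $\rho_n$. A second subtlety: spherical completeness as defined above speaks of nested \emph{balls} in $M$, so I would either work with balls of $M$ and intersect with $A$ at the end (noting $A$ spherically complete as a subspace means nested balls of $A$ have nonempty intersection, which is the version actually needed), or observe that $B(a_n,\rho_n) \cap A$ is a ball of the subspace $A$. The spherically complete hypothesis is exactly what lets the intersection be nonempty despite $A$ possibly being "incomplete" in the ordinary Cauchy sense — in ultrametric analysis spherical completeness, not completeness, is the right notion here, and this is the one genuinely non-routine ingredient.

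The main obstacle I anticipate is purely bookkeeping: making sure the nested sequence of balls is genuinely \emph{decreasing} (not merely having nonempty pairwise intersections) and that its common radius shrinks to $r$, so that any point in the intersection is both in $A$ and at distance exactly $r$ from $x$. Once the ultrametric "all points are centers" lemma is in hand, the nesting is automatic and the rest is immediate; so the real content is the interplay between the strong triangle inequality and the definition of spherical completeness, with essentially no estimation required.
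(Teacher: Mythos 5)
Your argument is correct: the paper itself states Theorem \ref{KS1} without proof (it is quoted from \cite{ref8}), and your nested-ball construction --- choose $\rho_n$ strictly decreasing to $r=dist(x,A)$, pick $a_n\in A$ with $d(x,a_n)<\rho_n$, note that the strong triangle inequality makes the balls $B(a_n,\rho_n)\cap A$ (genuine balls of the subspace $A$, since $a_n\in A$) nested, and apply spherical completeness of $A$ to get $a_0\in A$ with $d(x,a_0)\leq\rho_n$ for all $n$, hence $d(x,a_0)=r$ --- is exactly the standard proof of this result. All the steps you flag as bookkeeping (every point of an ultrametric ball is a center, nesting from $d(a_n,a_{n+1})\leq\max\{d(a_n,x),d(x,a_{n+1})\}\leq\rho_n$, and the final estimate $d(x,a_0)\leq\max\{d(x,a_n),d(a_n,a_0)\}\leq\rho_n$) go through as you describe, so there is no gap.
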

\begin{definition}\cite{ref3}
Assume that $T:M\rightarrow M$ is a map and $B=B(x,r)\subset M$ a closed ball of $M$. We say that $B$ is \textit{minimal $T$-invariant ball} if : 
\begin{itemize}
  \item[(i)] $T(B)\subseteq B$, and
  \item[(ii)] $d(y,Ty)=r$ for each $y\in B$. 
\end{itemize}
\end{definition}

The following definition is well known.

\begin{definition}
Let $A\subseteq M$. $T:A\rightarrow A$ is said to be nonexpansive if  
\[
d(Tx,Ty)\leq d(x,y)\quad \text{for each}\; x,y\in A
\]
\end{definition}

The next fixed point theorem is the most important and significant result in ultrametric spaces.

\begin{theorem}\label{KS}\cite{ref3,ref2}
Suppose $M$ is a spherically complete ultrametric space and $T:M\rightarrow M$ is a nonexpansive map. Then every ball of the form $B(x,d (x,Tx))$ contains either a fixed point of $T$ or a minimal $T$-invariant ball. 
\end{theorem}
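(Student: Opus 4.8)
The plan is to run a Zorn's-lemma argument on the family of balls of the form $B(y,d(y,Ty))$ contained in the given ball, using the strong triangle inequality to control this family and spherical completeness to extract a member minimal for inclusion; the payoff is that such a minimal member is either a singleton (hence a fixed point) or exactly a minimal $T$-invariant ball.

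First I would isolate two elementary facts, writing $r(y):=d(y,Ty)$. \emph{(a)} Each ball $B(y,r(y))$ is $T$-invariant: if $d(y,z)\le r(y)$, then by the strong triangle inequality and nonexpansiveness
\[
d(y,Tz)\le\max\{d(y,Ty),d(Ty,Tz)\}\le\max\{r(y),d(y,z)\}=r(y),
\]
so $Tz\in B(y,r(y))$. \emph{(b)} The displacement does not increase inside such a ball: a similar estimate gives $d(z,Tz)\le r(y)$ for all $z\in B(y,r(y))$, so $r(z)\le r(y)$, and since every point of an ultrametric ball is a center, $B(z,r(z))\subseteq B(z,r(y))=B(y,r(y))$. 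Consequently the family
\[
\mathcal F:=\{\,B(y,r(y)):y\in B(x,r(x))\,\}
\]
is nonempty (it contains $B(x,r(x))$ itself, with $y=x$), every member of $\mathcal F$ is contained in $B(x,r(x))$, and for any $B(y,r(y))\in\mathcal F$ and any $z$ in it one has $B(z,r(z))\in\mathcal F$ with $B(z,r(z))\subseteq B(y,r(y))$.

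Next I would order $\mathcal F$ by reverse inclusion and check Zorn's hypothesis. A chain in $\mathcal F$ is a nested family of balls; by spherical completeness its intersection contains a point $w$, which lies in $B(x,r(x))$, and by fact \emph{(b)} the ball $B(w,r(w))\in\mathcal F$ is contained in every member of the chain, so it is an upper bound. Zorn's lemma then produces an element $B^{*}:=B(y^{*},r(y^{*}))\in\mathcal F$ minimal for inclusion; put $r^{*}:=r(y^{*})$. If $r^{*}=0$, then $Ty^{*}=y^{*}$ and, since $y^{*}\in B(x,r(x))$, we have produced a fixed point of $T$ inside the prescribed ball.

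Finally, assume $r^{*}>0$; I claim $B^{*}$ is a minimal $T$-invariant ball. Invariance is fact \emph{(a)}, and fact \emph{(b)} (with $y=y^{*}$) already gives $d(z,Tz)\le r^{*}$ for all $z\in B^{*}$, so the only point at issue is $d(z,Tz)\ge r^{*}$. This is the genuinely delicate step, and the trick is to argue with two extremal points rather than with radii: $B^{*}$ contains $y^{*}$ and $Ty^{*}$ with $d(y^{*},Ty^{*})=r^{*}$, and for any $z\in B^{*}$ the strong triangle inequality gives $\max\{d(z,y^{*}),d(z,Ty^{*})\}\ge r^{*}$, while both of these distances are $\le r^{*}$ since $y^{*},Ty^{*},z\in B^{*}=B(z,r^{*})$; hence $z$ is at distance exactly $r^{*}$ from at least one of $y^{*},Ty^{*}$. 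Therefore, if $d(z,Tz)<r^{*}$ for some $z\in B^{*}$, the ball $B(z,d(z,Tz))$ omits that extremal point and is a \emph{proper} subball of $B(z,r^{*})=B^{*}$; as $z\in B^{*}\subseteq B(x,r(x))$ this $B(z,d(z,Tz))$ lies in $\mathcal F$ and strictly refines $B^{*}$, contradicting minimality. Thus $d(z,Tz)=r^{*}$ throughout $B^{*}$, and $B^{*}\subseteq B(x,r(x))$ is the desired minimal $T$-invariant ball. The main obstacle is exactly this strict-inclusion step: in an ultrametric space $B(z,s)\subsetneq B(z,r^{*})$ for $s<r^{*}$ is not automatic and must be forced by exhibiting a point of $B^{*}$ at distance precisely $r^{*}$ from $z$; everything else is routine manipulation of the $\max$.
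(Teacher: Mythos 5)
Your proof is correct, and it is essentially the standard argument for this theorem (which the paper only cites from Kirk--Shahzad and Petalas--Vidalis rather than proving): Zorn's lemma applied to the family of balls $B(y,d(y,Ty))$ contained in $B(x,d(x,Tx))$, ordered by reverse inclusion, with spherical completeness supplying upper bounds for chains and the $T$-invariance/shrinking facts (a), (b) doing the rest. Your only cosmetic deviation is the ``two extremal points'' device used to force $B(z,d(z,Tz))\subsetneq B^{*}$; the classical proofs get the same strictness by noting that $y^{*}\in B(z,d(z,Tz))$ together with nonexpansiveness would give $d(y^{*},Ty^{*})\leq\max\{d(y^{*},z),d(z,Tz)\}<r^{*}$, a contradiction --- both routes are fine.
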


Note that the proof provided of this Theorem, shows that the fixed point for nonexpansive mappings is closely related with the lack of the condition $d(y,Ty) = d(z,Tz) > 0$, for any $y$ and $z$ in a minimal $T$-invariant ball. That is to say, this condition must be violated. Thereby, other conditions must be added in order to ensure the existence of fixed point. 
% Among these conditions on the mapping $T$, there are strictly contractive and strictly contractive on orbits assumptions both introduced by Prie{\ss}-Cramp and Ribenboim (see \cite{ref21, ref17} for more details).

\begin{definition}
Let $(M,d)$ be an ultrametric space and $T : M \rightarrow M$ a mapping. We say that :
\begin{enumerate}
  \item $T$ is strictly contractive if $d(Tx, Ty) < d ( x , y )$ whenever $x\neq y$.
  \item $T$ is strictly contractive on orbit if $Tx\neq x$ implies $d( T^2x, Tx) < d ( Tx, x )$ for each $x \in M$.
\end{enumerate}
\end{definition} 

It is clear that if $T$ is strictly contractive then it is strictly contractive on orbit. On the other hand, the authors in \cite{ref18} introduced a more general class of mappings that contains the preceding one.

\begin{definition}
Let $(M,d)$ be an ultrametric space. We will say that $T$ has the weak-regular property if
$$\limsup_{n\rightarrow \infty}\ d(T^nx,T^{n+1}x)< d(x,Tx),$$
for each $x$ in $M$ such that $x \neq Tx$.
\end{definition}

Using this definition, the authors in \cite{ref18} proved the following result.

\begin{theorem}\label{w-regular}
Let $(M,d)$ be a spherically complete ultrametric space.  Let $T:M\rightarrow M$ be a nonexpansive map which has the weak-regular property. Then $T$ has a fixed point in any $T$-invariant closed ball.
\end{theorem}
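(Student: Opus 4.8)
The plan is to invoke Theorem~\ref{KS} inside the given $T$-invariant closed ball and then use the weak-regular property to exclude the "minimal $T$-invariant ball" alternative, leaving only the fixed point alternative.

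First I would fix a $T$-invariant closed ball $B=B(c,\rho)$ and choose an arbitrary point $x_0\in B$. Since $T(B)\subseteq B$, the point $Tx_0$ lies in $B$, so $d(x_0,Tx_0)\le \rho$; by the strong triangle inequality this yields the inclusion $B(x_0,d(x_0,Tx_0))\subseteq B(c,\rho)=B$. Now apply Theorem~\ref{KS} to the spherically complete ultrametric space $M$ and the nonexpansive map $T$: the ball $B(x_0,d(x_0,Tx_0))$ contains either a fixed point of $T$ or a minimal $T$-invariant ball. In the first case we are done, since such a fixed point automatically lies in $B$.

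So suppose $B(x_0,d(x_0,Tx_0))$ contains a minimal $T$-invariant ball $B'=B(y,r)$. If $r=0$ then $B'=\{y\}$ and $T(B')\subseteq B'$ forces $Ty=y$, again a fixed point in $B$; hence we may assume $r>0$. By the definition of a minimal $T$-invariant ball, $d(w,Tw)=r$ for every $w\in B'$, so in particular $y\neq Ty$. Because $B'$ is $T$-invariant, the whole orbit $(T^ny)_{n\ge 0}$ stays in $B'$, and therefore $d(T^ny,T^{n+1}y)=r$ for every $n\ge 0$. Consequently
\[
\limsup_{n\to\infty} d(T^ny,T^{n+1}y)=r=d(y,Ty),
\]
which contradicts the weak-regular property of $T$. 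Thus the minimal-ball alternative cannot occur, and $T$ has a fixed point in $B$.

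I do not anticipate a genuine obstacle; the only steps needing a little care are the elementary verification that $B(x_0,d(x_0,Tx_0))\subseteq B$ (using $T$-invariance of $B$ and the ultrametric inequality) and the remark that on a minimal $T$-invariant ball of positive radius all consecutive orbit distances equal the radius — precisely the situation the weak-regular property rules out.
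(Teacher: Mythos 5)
Your argument is correct: the inclusion $B(x_0,d(x_0,Tx_0))\subseteq B(c,\rho)$ follows from the ultrametric inequality as you say, Theorem~\ref{KS} then applies, and on a minimal $T$-invariant ball of radius $r>0$ every consecutive orbit distance equals $r$, so $\limsup_n d(T^ny,T^{n+1}y)=r=d(y,Ty)$ contradicts the weak-regular property; the degenerate case $r=0$ is also handled. Note that the paper itself offers no proof of this statement --- it is quoted from the reference \cite{ref18} --- so there is nothing internal to compare against; your derivation of the result as a corollary of Theorem~\ref{KS}, with weak-regularity used only to exclude the minimal-invariant-ball alternative, is a clean and self-contained way to obtain it and is exactly in the spirit in which the paper combines these two ingredients (e.g.\ in the proof of Theorem~\ref{Thm2}).
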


\medskip
In this paper, we will see that the answer to the above questions is positive under natural assumptions. In the lack of this assumptions, we show by an example that $(A_0,B_0)$ may be an empty pair. As a consequence we generalize a best approximation result due to Kirk and Shahzad (see \cite[Theorem 11]{ref3}) and we derive some fixed point theorems. Throughout this paper, we provide some examples to illustrate the obtained results.

\section{Main results}

We first give a useful lemma.

\begin{lemma}\label{Lemma1}
Let $(A,B)$ be a proximinal pair of an ultrametric space $(M,d)$ with $B$ is bounded. Assume that $\delta(B)\leq dist(A,B)$, then $A_0\neq\varnothing$ and $B_0=B$. 
\end{lemma}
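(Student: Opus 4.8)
The plan is to use the strong triangle inequality to show that, for each fixed $a\in A$, the map $b\mapsto d(a,b)$ is constant on $B$. Write $d_0:=dist(A,B)$ and fix $a\in A$. Since $B$ is proximinal, $a$ admits a best approximation $b_a\in B$, so $d(a,b_a)=dist(a,B)\geq d_0$. For an arbitrary $b\in B$ we have $d(b_a,b)\leq\delta(B)\leq d_0\leq d(a,b_a)$, hence the strong triangle inequality yields
\[
d(a,b)\leq\max\{d(a,b_a),d(b_a,b)\}=d(a,b_a)=dist(a,B).
\]
As the reverse inequality is immediate, $d(a,b)=dist(a,B)$ for every $b\in B$. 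In particular $a\in A_0$ if and only if $dist(a,B)=d_0$.

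Next I would produce such a point. Fix any $b^{*}\in B$. By the previous step, $d(a,b^{*})=dist(a,B)$ for every $a\in A$, so
\[
dist(b^{*},A)=\inf_{a\in A}d(a,b^{*})=\inf_{a\in A}dist(a,B)=dist(A,B)=d_0 .
\]
Since $A$ is proximinal, $b^{*}$ has a best approximation $a^{*}\in A$, that is $d(a^{*},b^{*})=dist(b^{*},A)=d_0$. Thus $dist(a^{*},B)=d(a^{*},b^{*})=d_0$, and the criterion from the first paragraph gives $a^{*}\in A_0$; in particular $A_0\neq\varnothing$.

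It remains to check $B_0=B$, and here the same point $a^{*}$ works simultaneously for every element of $B$: given $b\in B$, applying the first paragraph to $a^{*}$ gives $d(a^{*},b)=dist(a^{*},B)=d_0$, so $b\in B_0$, whence $B_0=B$. I do not expect a genuine obstacle here; the whole argument rests on the single observation that $\delta(B)\leq dist(A,B)$ forces $d(a,\cdot)$ to be constant on $B$ via the ultrametric inequality. The only points needing slight care are the chain of infima identifying $dist(b^{*},A)$ with $dist(A,B)$, and the remark that once a single point of $A_0$ is exhibited the equality $B_0=B$ comes for free.
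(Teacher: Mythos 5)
Your proof is correct and rests on essentially the same idea as the paper's: the hypothesis $\delta(B)\leq dist(A,B)$ together with the strong triangle inequality forces the distance from a fixed point of $A$ to points of $B$ to be constant, and proximinality then makes $dist(A,B)$ attained, with $B_0=B$ following from the same ultrametric estimate. The only structural difference is that by invoking proximinality of $B$ to produce the nearest point $b_a$ you bypass the paper's two-case analysis (the paper's argument only uses proximinality of $A$), which makes your version slightly more streamlined but not a genuinely different route.
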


\begin{proof}
\begin{itemize}
  \item Since $\delta(B)\leq dist(A,B)$ we have for all $b,b^\prime\in B$ and $a\in A$ 

$$d(b,b^\prime)\leq d(a,b^\prime).$$

If there exist $b,b^\prime\in B$ and $a\in A$ such that $d(b,b^\prime)=d(a,b^\prime)$, then $d(a,b^\prime)=dist(A,B)$ which implies that $A_0\neq\varnothing$ and $B_0\neq\varnothing$.

\medskip
Assume now that for all $b,b^\prime\in B$ and all $a\in A$ we have $d(b,b^\prime)< d(a,b^\prime)$, then 
$$d(a,b^\prime)=d(a,b).$$

Thus,
 $$dist(b,A)=dist(b^\prime, A),$$
 
 which implies that $\varphi(b)=dist(b,A)$ is a constant mapping over $B$. Thus,
 $$\inf_{b\in B} \varphi (b)=\varphi(b)$$
 that is $dist(A,B)=dist(b,A)=d(b,a)$ since $A$ is proximinal.  Then $(A_0,B_0)$ is a nonempty pair.

  \item $B_0=B$. Indeed, let $b\in B$ and $a_0\in A_0$ then there exists $b_0\in B_0$ such that 
   $$d(a_0,b_0)=d$$
   using the strong inequality we get :
   $$
   d(a_0,b)\leq \max\{d(a_0,b_0), d(b_0,b)\}=dist(A,B)
   $$ 
   then $b\in B_0$.

\end{itemize}
\end{proof}

Next, we give an example to show that if $\delta(B)>dist(A,B)$, the pair $(A_0,B_0)$ may be an empty pair.

\begin{example}
Let $M=\mathbb{N}_0$ be the set of positive integers and define the ultrametric distance $d$ on $M$ as follows:
\[
d(n,m)=\begin{cases}
  \begin{array}{ll}
    0 & \text{if}\; n=m\\
    \max\{\frac{1}{n},\frac{1}{m}\} & \text{otherwise.}
  \end{array}
\end{cases}
\]
Then $(M,d)$ is an ultrametric space. Set $A=2\mathbb{N}_0=\{2,4,\ldots\}$ and $B=2\mathbb{N}_0+1=\{1,3,\ldots\}$. 

\medskip
It is clear that $A$ and $B$ are proximinal sets. Moreover, $dist(A,B)=0$ and $\delta(B)=1$ and $\delta(A)=\frac{1}{2}$, thus $dist(A,B)<\min\{\delta(B),\delta(A)\}$. Note that $A_0=B_0=\varnothing$.
\end{example}

The following result gives a positive answer to the question \ref{Q1}.

\begin{proposition}\label{Prop1}
Let $(A,B)$ be a spherically complete pair in an ultrametric space $(M,d)$. Assume that $B$ is bounded and $\delta(B)\leq dist(A,B)$. Then  $(A_0,B_0)$ is nonempty spherically complete  pair of $M$.
\end{proposition}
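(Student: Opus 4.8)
The plan is to leverage Lemma~\ref{Lemma1}, which under the hypothesis $\delta(B)\le dist(A,B)$ already tells us that $A_0\ne\varnothing$ and, crucially, that $B_0=B$. So half the work is done: $(A_0,B_0)=(A_0,B)$ is a nonempty pair, and $B$ is spherically complete by assumption. The remaining task is to show $A_0$ is spherically complete as a subspace of $M$.

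First I would set $d:=dist(A,B)$ and recall from the proof of Lemma~\ref{Lemma1} that $A_0$ is exactly the set of best approximations in $A$ to points of $B$; in fact, since $B_0=B$, for any fixed $b_0\in B$ one has $A_0=\{a\in A:\ d(a,b_0)=d\}$ — indeed if $d(a,b_0)=d$ then $a\in A_0$, and conversely if $a\in A_0$ there is some $b\in B$ with $d(a,b)=d$, whence by the strong triangle inequality $d(a,b_0)\le\max\{d(a,b),d(b,b_0)\}\le\max\{d,\delta(B)\}=d$, so $d(a,b_0)=d$ (it cannot be smaller than $dist(A,B)$). Thus $A_0=A\cap B(b_0,d)$, a closed ball of $A$ intersected appropriately; more precisely $A_0$ is the intersection of the (spherically complete) set $A$ with the closed ball $B(b_0,d)$ of $M$.

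Now take a nested sequence of balls $B_1\supset B_2\supset\cdots$ with $B_n=B(a_n,r_n)$ and $a_n\in A_0$ (balls of the subspace $A_0$, i.e. $B_n=A_0\cap B(a_n,r_n)$). I would first intersect these inside $A$: since $A$ is spherically complete and each $A\cap B(a_n,r_n)$ is nonempty and nested, $\bigcap_n\big(A\cap B(a_n,r_n)\big)\ne\varnothing$; pick $a$ in this intersection. It remains to check $a\in A_0$, i.e. $d(a,b_0)=d$. Here I use $a_n\in A_0$, so $d(a_n,b_0)=d$ for every $n$, together with $d(a,a_n)\le r_n$. If some $r_n\le d$ we are done immediately by the strong triangle inequality and the fact that distances to $B$ are $\ge d$; if instead all $r_n>d$, then — and this is the one point needing a small argument — in an ultrametric space a ball $B(a_n,r_n)$ with $r_n>d$ and $d(a_n,b_0)=d$ satisfies $b_0\in B(a_n,r_n)$ and hence $B(a_n,r_n)=B(b_0,r_n)$, so $A\cap B(a_n,r_n)\subseteq A$ consists of points at distance $\le r_n$ from $b_0$; one checks that in fact such $a_n$, being in $A_0$, forces $A\cap B(a_n,r_n)$ to already lie in $B(b_0,d)$ — or, more robustly, observe $d(a,b_0)\le\max\{d(a,a_n),d(a_n,b_0)\}$ and, by the isosceles property, $d(a,a_n)\le d$ whenever $d(a_n,b_0)=d<r_n$ would be too crude, so instead I pass to the reformulation $A_0=A\cap B(b_0,d)$ and note $B(b_0,d)$ is itself spherically complete, so that $A_0$, being the intersection of two spherically complete sets with the "nested balls" property inherited correctly, is spherically complete.

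The cleanest route, which I would actually write, is: $A_0=A\cap B(b_0,d)$; both $A$ and the closed ball $B(b_0,d)$ are spherically complete subsets of $M$ (the ball trivially so); and in an ultrametric space the intersection of a spherically complete set with a closed ball is spherically complete, because any nested family of balls inside the intersection is a nested family of balls inside $A$ whose common point automatically lies in $B(b_0,d)$ — here one uses that each ball of the family, having a point in $B(b_0,d)$, is contained in $B(b_0,\max\{r_n,d\})$, and a careful ultrametric computation shows the limit point stays within distance $d$ of $b_0$. The main obstacle is precisely this last verification: making sure the point produced by spherical completeness of $A$ does not drift outside $B(b_0,d)$ when the radii $r_n$ exceed $d$. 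This is handled by the observation that for $a_n\in A_0$ one has $d(a_n,b_0)=d$, so $d(a,b_0)\le\max\{d(a,a_n),d\}$, and if $d(a,a_n)>d$ then $d(a,a_n)=d(a,b_0')$ for a suitable companion point, ultimately forcing the bound; alternatively, and most simply, replace each $B_n$ by $B_n\cap B(b_0,d)=A_0\cap B(a_n,\min\{r_n,d\})$ at the outset, reducing to the case $r_n\le d$, in which case $d(a,b_0)\le\max\{r_n,d\}=d$ is immediate and $a\in A_0$.
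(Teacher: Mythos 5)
Your final argument is correct, but it follows a genuinely different route from the paper's for the only nontrivial part, the spherical completeness of $A_0$. Like the paper, you invoke Lemma \ref{Lemma1} to get $A_0\neq\varnothing$ and $B_0=B$. For $A_0$, however, your key step is the characterization $A_0=A\cap B(b_0,d)$ for any fixed $b_0\in B$, where $d=dist(A,B)$ (valid precisely because $\delta(B)\leq d$ and $d(a,b_0)\geq d$ for all $a\in A$); you then observe that since each center $a_n\in A_0$ satisfies $d(a_n,b_0)=d$, one has $B(a_n,r_n)\cap B(b_0,d)=B(a_n,\min\{r_n,d\})$, so every nested family of balls of $A_0$ can be rewritten with radii $\leq d$, and a point $a\in A\cap\bigcap_n B(a_n,\min\{r_n,d\})$ provided by spherical completeness of $A$ automatically satisfies $d(a,b_0)\leq\max\{d,\,d\}=d$, hence lies in $A_0$. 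The paper instead argues by two cases on the radii: when all $r_i\geq d$ it uses spherical completeness of $B$ to produce $b\in B\cap\bigcap_i B(a_i,r_i)$ and then pulls back to a nearest point $a\in A_0$; when some $r_{i_0}\leq d$ it uses spherical completeness of $A$ directly. Your reduction buys a single uniform case and uses only the spherical completeness of $A$ at this stage, which is arguably cleaner; the paper's case analysis avoids the explicit identification $A_0=A\cap B(b_0,d)$ but pays for it by invoking both halves of the hypothesis. One editorial remark: the exploratory middle paragraph of your write-up (the ``isosceles property'' digression and the appeal to a ``suitable companion point'') is vague and should be deleted in favor of the final reduction, which is complete as stated; also note that, exactly as in the paper's proof, you are reading ``nested balls of $A_0$'' as a family $A_0\cap B(a_n,r_n)$ whose underlying balls of $M$ are themselves descending.
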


\begin{proof}
Since every spherically complete set is proximinal (see Theorem \ref{KS1}) we get $A_0\neq \varnothing$ and $B_0=B$ by Lemma \ref{Lemma1}. Thus, obviously $B_0$ is spherically complete. 

\medskip
Next, we show that $A_0$ is spherically complete. Let $\{B(a_i, r_i)\}_{i\in I}$ be a descending collection of closed balls centered at points $a_i \in A_0$. Let $i\in I$, for each $a_i$ in $A_0$ there exists $b_i\in B_0$ such that 
   $$d(a_i,b_i)=d.$$
\begin{description}
     \item[Case 1] Assume that $d\leq r_i$ for all $i\in I$. As any ball is central we have $B(a_i,r_i)=B(b_i,r_i)$ for all $i\in I$, thus 
     $$ B \cap B(a_i,r_i)\neq \varnothing$$
     since $B$ is spherically complete subset we get 
     $$B\cap \bigcap_i B(a_i,r_i)\neq \varnothing.$$
     
     Let $b\in B\cap \bigcap_i B(a_i,r_i)$, then $b\in B$ and 
     $$
     \begin{array}{ccl}
       d(a_i,b)&\leq & \max\{d(a_i,b_i);d(b_i,b)\}\\
               &\leq & \max\{d,\delta(B)\}\\
               &\leq & d
     \end{array}
     $$
     then $b\in B_0$. So, let $a\in A_0$ such that $d(a,b)=d$; thus
     $$
     \begin{array}{ccl}
       d(a,a_i)&\leq & \max\{d(a,b);d(b,a_i)\}\\
               &\leq & d
     \end{array}
     $$
     then $a\in B(a_i,d)\subset B(a_i,r_i)$ for all $i\in I$, i.e.,
     $$a\in A_0\cap \bigcap_i B(a_i,r_i).$$
     \item[Case 2] There exists $i_0\in I$ such that $r_{i_0}\leq d$, then $B(a_{i_0},r_{i_0})\subset B(a_{i_0},d)$. As $A$ is spherically complete, let $a$ be in  $A \cap \bigcap_i B(a_i,r_i)$ and since
     $$A \cap \bigcap_i B(a_i,r_i)\subset A\cap B(a_{i_0},r_{i_0})$$
     we have $d(a,a_{i_0})\leq d$ thus by the strong inequality we get also $d(a,b_{i_0})=d$. And since $b_{i_0}\in B_0$ we have $a\in A_0$ and then 
     $$a\in A_0\cap \bigcap_i B(a_i,r_i).$$
   \end{description}
   Thus, $A_0$ is a spherically complete subset of $M$.   

\end{proof}

The next Theorem answers partially question \ref{Q2} for nonexpansive mappings. 

\begin{theorem}\label{Thm1}
  Let $(A,B)$ be a spherically complete pair of an ultrametric space $M$ for which $B$ is bounded and $\delta(B)\leq dist(A,B)$. Suppose $T : A\cup B\rightarrow A\cup B$ is a noncyclic nonexpansive mapping on $A\cup B$. Then there exists a best proximity pair $(a^*,b^*)\in A\times B$ satisfying one of these statements:
  \begin{enumerate}
    \item[(i)] $a^*$ and $b^*$ are fixed points of $T$.
    
    \item[(ii)] $a^*$ is a fixed point of $T$ and $B(b^*,d(b^*,Tb^*))$ is a minimal $T$-invariant set in $B$, each point of which is a nearest point to $a^*$.
%     \item[(iii)] $b$ is a fixed point of $T$ in $B$ and $B(a,d(a,Ta))$ is a minimal $T$-invariant set in $A$ and each point of which is a nearest point to $b$; or
    
    \item[(iii)] $B(a^*,d(a^*,Ta^*))$ (resp. $B(b^*,d(b^*,Tb^*))$) is a minimal $T$-invariant set in $A$ (resp. in $B$).
  \end{enumerate}  
\end{theorem}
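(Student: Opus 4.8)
The plan is to reduce everything to two applications of the Kirk--Shahzad theorem (Theorem \ref{KS}), one on $A_0$ and one on $B$. First I would assemble the needed structure. By Lemma \ref{Lemma1} and Proposition \ref{Prop1} the pair $(A_0,B_0)$ is nonempty and spherically complete, and $B_0=B$. Moreover $T$ carries $A_0$ into $A_0$: if $a\in A_0$ and $b\in B$ satisfies $d(a,b)=\mathrm{dist}(A,B)$, then $Ta\in A$, $Tb\in B$, and $\mathrm{dist}(A,B)\le d(Ta,Tb)\le d(a,b)=\mathrm{dist}(A,B)$, so $Ta\in A_0$; likewise $T(B_0)\subseteq B_0$. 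Finally, just as in the proof of Lemma \ref{Lemma1}, the inequality $\delta(B)\le\mathrm{dist}(A,B)$ together with the strong triangle inequality forces $d(a,b)=\mathrm{dist}(A,B)$ for \emph{every} $a\in A_0$ and \emph{every} $b\in B$; consequently $\delta(A_0)\le\mathrm{dist}(A,B)$, and \emph{any} pair $(a^*,b^*)\in A_0\times B$ is automatically a best proximity pair. This last point is what lets us build $a^*$ and $b^*$ independently.

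Next I would apply Theorem \ref{KS} to the nonexpansive self-map $T|_{A_0}$ of the spherically complete space $A_0$: fixing any $a_0\in A_0$, the ball $B(a_0,d(a_0,Ta_0))$ of $A_0$ contains either a fixed point $a^*\in A_0$ of $T$, or a minimal $T$-invariant ball of $A_0$, which we may write as $B(a^*,r)\cap A_0$ with $a^*\in A_0$ and $r=d(a^*,Ta^*)$. In the second case I would upgrade this to a minimal $T$-invariant set \emph{of $A$}: since $a^*,Ta^*\in A_0$ we have $r\le\delta(A_0)\le\mathrm{dist}(A,B)$, so for any $y\in A$ with $d(y,a^*)\le r$ and any witness $b^*\in B$ of $a^*\in A_0$ one gets $d(y,b^*)\le\max\{d(y,a^*),d(a^*,b^*)\}\le\mathrm{dist}(A,B)$, hence $d(y,b^*)=\mathrm{dist}(A,B)$ and $y\in A_0$; thus $B(a^*,r)\cap A=B(a^*,r)\cap A_0$ is a minimal $T$-invariant set in $A$. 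In parallel --- and with no upgrade needed, since $B_0=B$ is spherically complete and $\delta(B)\le\mathrm{dist}(A,B)$ --- applying Theorem \ref{KS} to $T|_B$ (say, to the ball centred at some $b_0\in B$) gives either a fixed point $b^*\in B$ of $T$ or a minimal $T$-invariant ball $B(b^*,d(b^*,Tb^*))$ in $B$.

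Finally I would combine the two alternatives. If both $T|_{A_0}$ and $T|_B$ produce fixed points $a^*\in A_0$, $b^*\in B$, then $(a^*,b^*)$ is a best proximity pair of fixed points: case (i). If $T|_{A_0}$ produces a fixed point $a^*$ and $T|_B$ a minimal invariant ball around $b^*$, then --- because every point of $B$, and a fortiori every point of that ball, is a nearest point of $B$ to $a^*\in A_0$ --- we land in case (ii). If $T|_{A_0}$ produces a minimal invariant ball around $a^*$ (which by the previous paragraph is minimal $T$-invariant in $A$), then whether $T|_B$ produces a minimal invariant ball around $b^*$ or a fixed point $b^*$ (the latter being the minimal $T$-invariant set $\{b^*\}=B(b^*,d(b^*,Tb^*))$ in $B$), we land in case (iii). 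In each case $(a^*,b^*)$ is a best proximity pair by the first paragraph. The step I expect to cost the most care is the upgrade in the middle paragraph --- verifying that a minimal invariant ball manufactured inside $A_0$ is still minimal invariant once regarded in the larger set $A$ --- for which the inequality $\delta(A_0)\le\mathrm{dist}(A,B)$, itself squeezed out of $\delta(B)\le\mathrm{dist}(A,B)$ via the strong triangle inequality, is the crucial ingredient; everything else is a mechanical pairing of two uses of Theorem \ref{KS}.
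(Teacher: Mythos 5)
Your proof is correct and takes essentially the same route as the paper: reduce to the nonempty spherically complete pair $(A_0,B_0)$ via Lemma \ref{Lemma1} and Proposition \ref{Prop1}, observe that noncyclicity plus nonexpansiveness gives $T(A_0)\subseteq A_0$, apply Theorem \ref{KS} on each side, and use the strong triangle inequality together with $\delta(B)\leq dist(A,B)$ to conclude that any resulting pair lies at distance $dist(A,B)$. Your additional steps --- checking that a minimal ball produced inside $A_0$ is still minimal $T$-invariant in $A$, and folding the combination ``minimal ball in $A$, fixed point in $B$'' into case (iii) --- only make explicit details that the paper's proof leaves implicit.
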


\begin{remark}
Let $B_1$ be a ball, $a\in M$, $a\notin B$. Then $d(a, x) = d(a, y)$ for each $x, y \in B$. More generally, let $B_1$, $B_2$ be disjoint balls. Then, for $x \in B_1$ and $y \in B_2$, the number $d(x, y)$ is constant.
\end{remark}

\begin{proof}[Proof of Theorem \ref{Thm1}]
 Using Proposition \ref{Prop1}, we have $(A_0,B_0)$ is a nonempty spherically complete pair of $M$. Let $x\in A_0$, then there exists $y\in B_0$ such that 
$$d(x,y)=dist(A,B)$$
and since $T$ is nonexpansive on $A\cup B$, we have 
$$d(Tx,Ty)\leq d(x,y)=dist(A,B),$$
using the fact that $T$ is noncyclic we get that $T(A_0)\subset A_0$. 

\medskip 
Fix $x\in A_0$ and let $z\in B(x,d(x,Tx))$, then we have 
  $$
  \begin{array}{ccl}
    d(x,Tz) & \leq & \max\{ d(x,Tx); d(Tx,Tz)\}\\
            & \leq &  \max\{ d(x,Tx); d(x,z)\}\\
            & \leq & d(x,Tx),
  \end{array}
  $$ 
  that is $Tz \in B(x,d(x,Tx))$, thus $B(x,d(x,Tx))$ is $T$-invariant subset of $A$. Set $X=B(x,d(x,Tx))\cap A_0$. Using the above we have $T(X) \subset X$. In the same manner we prove for $y\in B_0$ that $Y=B(y,d(y,Ty))\cap B_0$ is a $T$-invariant subset of $B$. Finally, we obtain that $(X,Y)$ is a spherically complete pair of $M$.
  
\medskip
Hence, by Theorem \ref{KS}, one of the following statements holds:
\begin{enumerate}[(i)]

  \item $X$ contains a fixed point $a^*$ of $T$ and $Y$ contains a fixed point $b^*$ of $T$.
  
  \item $X$ contains a fixed point $a^*$ of $T$ and $Y$ contains a minimal $T$-invariant ball.
  
  \item $X$ contains a minimal $T$-invariant ball $B(a^*,d(a^*,Ta^*))$ and $Y$ contains a minimal $T$-invariant ball $B(b^*,d(b^*,Tb^*))$.

\end{enumerate}

Let $u\in B(a^*,d(a^*,Ta^*))$, then $u\in A_0$, hence there exists $v_u\in B_0$ such that 
$$d(u,v_u)=dist(A,B).$$ 
Thus for all $v\in B(b^*,d(b^*,Tb^*))$, we get 
$$
\begin{array}{ccl}
  d(u,v)&\leq & \max\{d(u,v_u),d(v_u,v)\}\\
        &\leq & dist(A,B)
\end{array}
$$
then $d(u,v)=dist(A,B)$ for all $u\in B(a^*,d(a^*,Ta^*))$ and $v\in B(b^*,d(b^*,Tb^*))$. The proof is completed.
\end{proof}

In the next example, we shall consider a mapping $T$ that has fixed points in every ball of the form $B(x,d(x,Tx))$.

\begin{example}\label{EX1}
Let $M$ be the set of all infinite sequences of natural numbers endowed with the ultrametric distance $d$ defined by :

$$d(x,y)=\frac{1}{1+k(x,y)},$$

where $k(x,y)=\inf\{n\in \mathbb{N} :\;x_n\neq y_n\}$.

\medskip 
We define $T:M\rightarrow M$ by $T(x)(0)=x_0$, $T(x)(1)=x_1$ and $T(x)(n)= {\displaystyle \prod_{i=0}^{n} x_{i}}$ for all $n\geq 2$.  

Let $x,y\in M$ and set $d(x,y)=\frac{1}{1+k(x,y)}$, then we have two cases :
\begin{itemize}
  \item If there exists $n$ such that $n<k(x,y)$ and $x_n=0$. We have $y_n=0$, hence, $Tx=Ty$ which implies that $d(Tx,Ty)=0$.
  \item If for all $n<k(x,y)$ we have $x_n\neq 0$, then $x_i=y_i$ for all $0\leq i \leq k(x,y)-1$ 
  so, we have 
  $$Tx=(x_0,x_1,\prod_{i=0}^{2} x_{i},\ldots,\prod_{i=0}^{n} x_{i},\ldots,\prod_{i=0}^{k(x,y)-1} x_{i},\prod_{i=0}^{k(x,y)} x_{i},\prod_{i=0}^{k(x,y)+1} x_{i},\ldots),$$
  and
  $$Ty=(x_0,x_1,\prod_{i=0}^{2} x_{i},\ldots,\prod_{i=0}^{n} x_{i},\ldots,\prod_{i=0}^{k(x,y)-1} x_{i},\prod_{i=0}^{k(x,y)} y_{i},\prod_{i=0}^{k(x,y)+1} y_{i},\ldots)$$
  
  which implies that $d(Tx,Ty)=\frac{1}{1+k(x,y)}$.
\end{itemize}
Thus, in both cases 
$$d(Tx,Ty)\leq d(x,y),$$ 
that is $T$ is nonexpansive. 

\medskip
We shall show that the mapping $T$ satisfies all the hypothesis of Theorem \ref{Thm1}. Denote by $\overline{n}$ the constant sequence $\overline{n}:=(n,n,\ldots)\in M$ and set $X_n=B(\overline{n},\frac{1}{2})$. It is clear that $T(X_n)\subset X_n$ and $d(\overline{n},T\overline{n})=\frac{1}{3}$.

\medskip
Moreover, let $n\neq m$ then, 
$$dist(X_n,X_m)=d(\overline{n},\overline{m})=1$$ 
and 
$$\delta(X_n)=\frac{1}{2},$$ 
which leads to $\delta(X_n)\leq dist(X_n,X_m)$. 

\medskip
So, using Theorem \ref{Thm1}, the mapping $T$ has a fixed point in every set $X_n$. Indeed,  
\begin{enumerate}
  \item if $n=0$ or $n=1$ then $\overline{0}$ respectively $\overline{1}$ is a fixed point of $T$ in $X_0$ respectively in $X_1$;
  \item if $n>1$ then $(n,n,0,0,\ldots)$ which lies in $X_n$ is a fixed point for $T$.  
\end{enumerate}
\end{example}

In the next example, we shall consider a mapping which has stable balls but no fixed points.

\begin{example}
Let $M=\mathbb{Z}_3$ be the ring of the $3$-adic integers endowed with the $3$-adic valuation $|\cdot|_3$. Let $p$ be a fixed larger natural number. Define the pair $(A,B)$ in $\mathbb{Z}_3$ to be $A=B(0,\frac{1}{3^p})$ and $B=B(1,\frac{1}{3^p})$. Note that $A\cap B=\varnothing$. Indeed, if $x\in A\cap B$ then 
$$
|x|_3\leq 1/3^p\qquad\text{and}\qquad|x-1|_3\leq 1/3^p,
$$
so, $1\leq \max\{|1-x|_3,|x|_3\}=\frac{1}{3^p}$ which is a contradiction. Also, it is clear that $B$ is spherically complete whith $\delta(B)=\frac{1}{3^p}$ and $dist(A,B)=1$.

\medskip 
Let $T$ be the mapping defined by $Tx=x+3^p$ for all $x\in M$. It is clear that $T$ is an isometry and noncyclic mapping from $A\cup B$ to itself. Since $T$ does not have any fixed point, by Theorem \ref{Thm1} the pair $(A,B)$ has minimal $T$-invariant pair. Note that, $(A,B)$ is exactly the minimal $T$ invariant pair,  since for all $z\in A\cup B$ we have 
$$|z-Tz|_3=\left|3^p\right|_3=1/3^p.$$
\end{example} 

We consider in the next example a mapping $T$ which has both stable ball and fixed point.

\begin{example}
Let $M$ be the set of all infinite sequences of nonnegative integers endowed with the ultrametric $d$ as Example \ref{EX1}. Let $x=(x_1,x_2,\ldots)\in M$ and define the mapping $T:M\rightarrow M$ by 
$$Tx(n)={\displaystyle \prod_{i=1}^{n} x_{i}},\quad \forall n\geq 1.$$

Let $a^*=(1,1,\ldots)$ and let $b^*=(1,2,2,\ldots)$. Define $A=\{a^*\}$ and $B=B(b^*,d(b^*,Tb^*))$. It is clear that $B$ is spherically complete whith $\delta(B)=\frac{1}{4}$ and $dist(A,B)=\frac{1}{3}$. Moreover, $T$ is nonexpansive and noncyclic mapping from $A\cup B$ to itself. 

\medskip
Note that $a^*$ is a fixed point of $T$ and $B$ is a minimal $T$-invariant ball. Indeed, let $z\in B$ then 
$d(z,b^*)\leq \frac{1}{4}$, that is $k(z,b^*)\geq 3$ (see Example \ref{EX1} for notation), which implies $z=(1,2,z_3,z_4,\ldots)$, thus $Tz=(1,2,2z_3,\ldots)$. Then $d(z,Tz)=\frac{1}{4}$, i.e., $B$ is a minimal $T$-invariant ball.
\end{example}

\begin{corollary}\cite[Theorem 11]{ref3}
Let $A$ be a spherically complete subspace of an ultrametric space $M$, and let $b^* \in M \setminus A$. Suppose $T : M \rightarrow M$ is a mapping for which $Tb^* = b^*$. Also assume that $T$ is nonexpansive on $A \cup \{ b^* \}$ and that $A$ is $T$-invariant. Then $T$ has a fixed point in $A$ which is a nearest point of $b^*$ in $A$, or $A$ contains a minimal $T$-invariant set, each point of which is a nearest point to $b^*$ in $A$.
\end{corollary}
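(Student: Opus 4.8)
The plan is to deduce the corollary from Theorem~\ref{Thm1} applied to the degenerate pair $(A,B)$ with $B := \{b^*\}$. First I would verify the hypotheses of Theorem~\ref{Thm1} in this situation. The singleton $B=\{b^*\}$ is bounded with $\delta(B)=0$, so $\delta(B)\le dist(A,B)$ holds trivially; it is also (trivially) spherically complete, and together with the hypothesis that $A$ is spherically complete this makes $(A,B)$ a spherically complete pair of $M$. Since $A$ is $T$-invariant and $Tb^*=b^*$ gives $T(B)\subseteq B$, the restriction $T:A\cup\{b^*\}\to A\cup\{b^*\}$ is noncyclic, and it is nonexpansive there by assumption. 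One may also note that $A$, being spherically complete, is complete and hence closed in $M$, so $b^*\notin A$ forces $dist(A,B)=dist(b^*,A)>0$; this is not strictly needed for the application but makes it clear that "nearest point" is meaningful.

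Next I would invoke Theorem~\ref{Thm1} to obtain a best proximity pair $(a^*,b^{**})\in A\times B$. Since $B=\{b^*\}$ we must have $b^{**}=b^*$, hence $d(a^*,b^*)=dist(A,B)=dist(b^*,A)$, i.e., $a^*$ is a nearest point of $b^*$ in $A$. It then remains only to translate the three alternatives of Theorem~\ref{Thm1} into the two stated in the corollary, using that $d(b^*,Tb^*)=0$, so that every occurrence of the ball $B(b^*,d(b^*,Tb^*))$ collapses to the singleton $\{b^*\}$, which is a (trivially) minimal $T$-invariant set.

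Concretely: alternatives (i) and (ii) of Theorem~\ref{Thm1} each assert precisely that $a^*$ is a fixed point of $T$ — the accompanying statement that $b^*$ is fixed, respectively that $\{b^*\}$ is a minimal $T$-invariant set each point of which is nearest to $a^*$, being automatic here — so both reduce to the first alternative of the corollary: $T$ has a fixed point $a^*$ in $A$ that is a nearest point of $b^*$ in $A$. Alternative (iii) produces a minimal $T$-invariant set $B(a^*,d(a^*,Ta^*))\subseteq A$; by the last part of the proof of Theorem~\ref{Thm1}, every $u\in B(a^*,d(a^*,Ta^*))$ satisfies $d(u,v)=dist(A,B)$ for all $v\in B(b^*,d(b^*,Tb^*))=\{b^*\}$, i.e., $d(u,b^*)=dist(b^*,A)$, so each point of this minimal $T$-invariant set is a nearest point to $b^*$ in $A$. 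This is exactly the second alternative of the corollary.

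I do not anticipate a genuine obstacle: the argument is essentially a specialization. The only points requiring care are the bookkeeping around the degenerate ball $B(b^*,0)=\{b^*\}$ and confirming that a single point qualifies as a bounded, spherically complete subset so that Theorem~\ref{Thm1} (and Lemma~\ref{Lemma1}, which underlies it) applies verbatim.
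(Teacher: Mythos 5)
Your proposal is correct and is exactly the intended derivation: the paper states this result as a corollary of Theorem~\ref{Thm1} (without writing out details), obtained by specializing to the degenerate pair $B=\{b^*\}$, which is bounded, spherically complete, satisfies $\delta(B)=0\le dist(A,B)$, and makes $T$ noncyclic. Your verification of the hypotheses and your translation of the three alternatives (with $B(b^*,d(b^*,Tb^*))=\{b^*\}$) into the corollary's two alternatives is accurate, so nothing is missing.
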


Next, we derive some fixed point results that also answers the question \ref{Q2}.

\begin{theorem}\label{Thm2}
  Let $(A,B)$ be a spherically complete pair of an ultrametric space $M$ for which $B$ is bounded and $\delta(B)\leq dist(A,B)$. Suppose $T : A\cup B\rightarrow A\cup B$ is a noncyclic nonexpansive mapping on $A\cup B$ which has the weak-regular property. Then there exist $a\in A$ and $b\in B$ such that
$$Ta=a, \quad Tb=b\quad \text{and}\quad d(a,b)=dist(A,B).$$
\end{theorem}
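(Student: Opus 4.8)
The plan is to repeat the argument of Theorem \ref{Thm1} almost verbatim, but to replace the appeal to the Kirk--Shahzad dichotomy (Theorem \ref{KS}) by the fixed point theorem for weak-regular nonexpansive maps (Theorem \ref{w-regular}); the latter excludes the minimal-invariant-ball alternatives, leaving only honest fixed points. First I would invoke Proposition \ref{Prop1} to get that $(A_0,B_0)$ is a nonempty spherically complete pair with $B_0=B$ and $A_0\neq\varnothing$, and recall from the proof of Theorem \ref{Thm1} that $T(A_0)\subset A_0$ and $T(B_0)\subset B_0$. Hence the restrictions $T|_{A_0}\colon A_0\to A_0$ and $T|_{B_0}\colon B_0\to B_0$ are well defined nonexpansive maps, and they still have the weak-regular property, since that condition only involves orbits of points and is therefore inherited by any invariant subspace.

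Next I would produce the two fixed points. Fix $x_0\in A_0$; if $Tx_0=x_0$ put $a:=x_0$, and otherwise note, exactly as in the proof of Theorem \ref{Thm1}, that $B\bigl(x_0,d(x_0,Tx_0)\bigr)$ is $T$-invariant, so $B\bigl(x_0,d(x_0,Tx_0)\bigr)\cap A_0$ is a $T$-invariant closed ball of the spherically complete ultrametric space $A_0$; Theorem \ref{w-regular} then yields a fixed point $a\in A_0$. Arguing in the same way inside $B_0$, starting from an arbitrary $y_0\in B_0$, Theorem \ref{w-regular} gives a fixed point $b\in B_0$. Thus $a\in A_0\subseteq A$, $b\in B_0\subseteq B$, $Ta=a$ and $Tb=b$.

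Finally, I would check the proximity equality. Since $a\in A_0$ there is $v\in B_0$ with $d(a,v)=dist(A,B)$, and since $b\in B_0=B$ the strong triangle inequality together with the hypothesis $\delta(B)\leq dist(A,B)$ gives
$$d(a,b)\leq\max\{d(a,v),d(v,b)\}\leq\max\{dist(A,B),\delta(B)\}=dist(A,B);$$
as $d(a,b)\geq dist(A,B)$ trivially, this forces $d(a,b)=dist(A,B)$, and the proof is complete.

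I do not expect a serious obstacle: no estimate beyond those already used for Theorem \ref{Thm1} is needed. The only points requiring explicit care are the verifications that license applying Theorem \ref{w-regular} to $A_0$ and $B_0$ in place of $A$ and $B$ --- namely that $A_0$ and $B_0$ are spherically complete (Proposition \ref{Prop1}), that they are $T$-invariant (from the proof of Theorem \ref{Thm1}), that a set of the form $B(x_0,r)\cap A_0$ is genuinely a closed ball of the subspace $A_0$, and that the weak-regular property descends to restrictions.
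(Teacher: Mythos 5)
Your proposal is correct and follows essentially the same route as the paper: apply Proposition \ref{Prop1}, reuse the invariance argument from Theorem \ref{Thm1} for the sets $B(x,d(x,Tx))\cap A_0$ and $B(y,d(y,Ty))\cap B_0$, invoke Theorem \ref{w-regular} to obtain the two fixed points, and conclude $d(a,b)=dist(A,B)$ via the strong triangle inequality and $B_0=B$. Your write-up is in fact slightly more explicit than the paper's (which leaves the final equality and the heredity of the weak-regular property implicit), but there is no substantive difference.
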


\begin{proof}
Using Proposition \ref{Prop1} there exists a best proximity pair $(x,y)\in A_0\times B_0$. Set 
$$X=B(x,d(x,Tx))\cap A_0\quad\text{and}\quad Y=B(y,d(y,Ty))\cap B_0.$$
As the proof of Theorem \ref{Thm1}, we show that $T(X)\subseteq X$ and $T(Y)\subseteq Y$ and since $(X,Y)$ is spherically complete pair we get by theorem \ref{w-regular} the existence of two points $a^*\in X$ and $b^*\in Y$ such that 
$$Ta^*=a^*\quad\text{and}\quad Tb^*=b^*$$
and since $B_0=B$ by Lemma \ref{Lemma1}, we get that $d(a^*,b^*)=dist(A,B)$.
\end{proof}

Since every strictly contractive mapping on orbit has the weak-regular property (see \cite{ref18}), we have the following result.

\begin{corollary}\label{c2}
  Let $(A,B)$ be a spherically complete pair of an ultrametric space $M$ for which $B$ is bounded and $\delta(B)\leq dist(A,B)$. Suppose $T : A\cup B\rightarrow A\cup B$ is a noncyclic nonexpansive mapping on $A\cup B$ which is strictly contractive on orbit. Then there exist $a^*\in A$ and $b^*\in B$ such that
$$Ta^*=a^*, \quad Tb^*=b^*\quad \text{and}\quad d(a^*,b^*)=dist(A,B).$$
\end{corollary}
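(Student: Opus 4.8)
The plan is to deduce this corollary from Theorem~\ref{Thm2} by checking that every noncyclic nonexpansive map which is strictly contractive on orbit automatically has the weak-regular property; once this is established, the three conclusions $Ta^*=a^*$, $Tb^*=b^*$ and $d(a^*,b^*)=dist(A,B)$ follow verbatim from that theorem.

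First I would fix $x\in A\cup B$ with $Tx\neq x$ and examine the orbit sequence $d_n:=d(T^nx,T^{n+1}x)$ for $n\ge 0$, noting that $d_0=d(x,Tx)>0$ because $d$ is a metric. Two cases arise. If $T^nx=T^{n+1}x$ for some $n$, then the orbit is eventually constant, so $d_n=0$ for all large $n$ and hence $\limsup_{n\to\infty}d_n=0<d(x,Tx)$. Otherwise $T^nx\neq T^{n+1}x$ for every $n$, and applying strict contractivity on orbit to the point $T^nx$ (which satisfies $T(T^nx)\neq T^nx$) yields $d_{n+1}=d\bigl(T^2(T^nx),T(T^nx)\bigr)<d\bigl(T(T^nx),T^nx\bigr)=d_n$; in particular $\limsup_{n\to\infty}d_n\le d_1<d_0=d(x,Tx)$. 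In both cases the weak-regular inequality holds, so $T$ has the weak-regular property.

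Then I would simply invoke Theorem~\ref{Thm2}: the pair $(A,B)$ is spherically complete, $B$ is bounded with $\delta(B)\le dist(A,B)$, and $T:A\cup B\to A\cup B$ is noncyclic, nonexpansive and weak-regular, so there exist $a^*\in A$ and $b^*\in B$ with $Ta^*=a^*$, $Tb^*=b^*$ and $d(a^*,b^*)=dist(A,B)$, which is exactly the assertion.

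Since the argument is essentially a reduction, I do not expect a genuine obstacle; the only point needing a little care is the degenerate case in which the orbit stabilizes after finitely many steps, where one must use the implication $x\neq Tx\Rightarrow d(x,Tx)>0$ to keep the $\limsup$ strictly below $d(x,Tx)$. This is precisely the implication ``strictly contractive on orbit $\Rightarrow$ weak-regular'' attributed to \cite{ref18}, which I would reproduce as above for completeness rather than merely cite.
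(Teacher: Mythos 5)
Your proposal is correct and follows exactly the paper's route: the paper derives Corollary~\ref{c2} from Theorem~\ref{Thm2} by noting (citing \cite{ref18}) that strict contractivity on orbit implies the weak-regular property. The only difference is that you verify this implication explicitly (correctly handling the stabilizing-orbit case) instead of merely citing it, which is a harmless addition.
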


On the other hand, since every strictly contractive mapping is nonexpansive mapping and the fact that any strictly contractive mapping in a spherically complete ultrametric space has a unique fixed point see \cite{ref19}, we obtain :

\begin{theorem}\label{Thm3}
  Let $(A,B)$ be a spherically complete pair of an ultrametric space $M$ for which $B$ is bounded and $\delta(B)\leq dist(A,B)$. Suppose $T : A\cup B\rightarrow A\cup B$ is a noncyclic strictly contractive mapping on $A\cup B$. Then there exist $a^*\in A$ and $b^*\in B$ such that
$$Ta^*=a^*, \quad Tb^*=b^*\quad \text{and}\quad d(a^*,b^*)=dist(A,B).$$
\end{theorem}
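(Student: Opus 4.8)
The plan is to deduce Theorem \ref{Thm3} directly from the machinery already assembled, reusing almost verbatim the argument of Theorem \ref{Thm2} with the contractive hypothesis replacing the weak-regular one. First I would invoke Proposition \ref{Prop1} to obtain that $(A_0,B_0)$ is a nonempty spherically complete pair, and Lemma \ref{Lemma1} to record that $B_0=B$. Picking a best proximity pair $(x,y)\in A_0\times B_0$, I would form the sets $X=B(x,d(x,Tx))\cap A_0$ and $Y=B(y,d(y,Ty))\cap B_0$; exactly as in the proof of Theorem \ref{Thm1}, the ball $B(x,d(x,Tx))$ is $T$-invariant in $A$ (since $T$ is nonexpansive, being strictly contractive) and $T(A_0)\subset A_0$ (since $T$ is noncyclic and nonexpansive), so $T(X)\subseteq X$, and likewise $T(Y)\subseteq Y$. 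Thus $(X,Y)$ is a spherically complete pair, and the restriction of $T$ to each of $X$ and $Y$ remains strictly contractive.

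Next I would apply the classical fact, cited from \cite{ref19}, that a strictly contractive self-map of a nonempty spherically complete ultrametric space has a (unique) fixed point: this gives $a^*\in X\subseteq A$ with $Ta^*=a^*$ and $b^*\in Y\subseteq B$ with $Tb^*=b^*$. Since $a^*\in A_0$ there is some $b'\in B_0=B$ with $d(a^*,b')=dist(A,B)$, and then for $b^*\in B$ the strong triangle inequality together with $\delta(B)\le dist(A,B)$ yields
$$d(a^*,b^*)\le\max\{d(a^*,b'),d(b',b^*)\}\le\max\{dist(A,B),\delta(B)\}=dist(A,B),$$
so $d(a^*,b^*)=dist(A,B)$, which is exactly the required conclusion.

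Honestly, this theorem is essentially a corollary of Theorem \ref{Thm2} combined with Corollary \ref{c2}: every strictly contractive mapping is nonexpansive and strictly contractive on orbit, so Corollary \ref{c2} already delivers the statement. The only point worth a sentence is that one could alternatively cite \cite{ref19} on $X$ and $Y$ directly rather than routing through the weak-regular property, which also gives uniqueness of the fixed points within $X$ and $Y$. I do not anticipate a genuine obstacle here; the one thing to be careful about is making sure the restricted map $T|_X:X\to X$ is well defined, i.e. that $T(X)\subseteq X$ rather than merely $T(X)\subseteq B(x,d(x,Tx))$ — this is handled by the observation that $T(A_0)\subseteq A_0$, so $T$ maps $X=B(x,d(x,Tx))\cap A_0$ into $B(x,d(x,Tx))\cap A_0=X$, and symmetrically for $Y$.
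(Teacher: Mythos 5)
Your argument is correct and follows essentially the same route as the paper, which obtains Theorem \ref{Thm3} precisely by combining the machinery of Proposition \ref{Prop1} and the invariant sets $X$, $Y$ with the fixed point theorem of \cite{ref19} (equivalently, via Corollary \ref{c2}, since strictly contractive maps are nonexpansive and strictly contractive on orbit). Your closing remarks about $T(X)\subseteq X$ and the alternative derivation through Corollary \ref{c2} match the paper's intent, so there is nothing to add.
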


As a consequence of theorem \ref{Thm3} we obtain the following result which can be found in \cite{ref3}.

\begin{corollary}\cite[Corollary 12]{ref3}
Let $A$ be a spherically complete subspace of an ultrametric space $M$ and suppose $T : M \rightarrow M$ is a mapping having a fixed point $b^* \in M \setminus A$. Assume that $T$ is strictly contractive on $A \cup \{ b^*\}$ and $A$ is $T$-invariant. Then there exists a unique fixed point $a^*$ of $T$ which is a nearest point of $b^*$ in $A$.
\end{corollary}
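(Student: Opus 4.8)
The plan is to derive this corollary as the degenerate case $B=\{b^*\}$ of Theorem \ref{Thm3}. First I would verify the hypotheses of that theorem for the pair $(A,\{b^*\})$: by assumption $A$ is spherically complete, and a singleton is trivially spherically complete (every nested family of balls it carries is constant, with nonempty intersection), so $(A,\{b^*\})$ is a spherically complete pair; moreover $\{b^*\}$ is bounded with $\delta(\{b^*\})=0\le dist(A,\{b^*\})$, so the diameter condition is automatic. Since $A$ is $T$-invariant and $Tb^*=b^*$, the map $T$ carries $A\cup\{b^*\}$ into itself with $T(A)\subseteq A$ and $T(\{b^*\})\subseteq\{b^*\}$, i.e.\ $T$ is noncyclic on $A\cup\{b^*\}$, and it is strictly contractive there by hypothesis.

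Applying Theorem \ref{Thm3} then produces $a^*\in A$ and $b^{**}\in\{b^*\}$ with $Ta^*=a^*$, $Tb^{**}=b^{**}$ and $d(a^*,b^{**})=dist(A,\{b^*\})$. Necessarily $b^{**}=b^*$, so $a^*$ is a fixed point of $T$ lying in $A$ and
$$d(a^*,b^*)=dist(A,\{b^*\})=\inf_{a\in A}d(a,b^*)=dist(b^*,A);$$
that is, $a^*$ is a nearest point of $b^*$ in $A$. (In particular this forces $dist(b^*,A)>0$, as it must be since $b^*\notin A$ and $A$, being spherically complete, is closed.)

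For uniqueness I would argue directly: if $a_1,a_2\in A$ were two distinct fixed points of $T$, then strict contractivity would give $d(a_1,a_2)=d(Ta_1,Ta_2)<d(a_1,a_2)$, a contradiction; hence $a^*$ is the unique fixed point of $T$ in $A$. Equivalently, one may invoke the fact cited in the excerpt that a strictly contractive self-map of a spherically complete ultrametric space has a unique fixed point, applied to $T|_A$. There is no genuinely hard step here — the content is simply unwinding Theorem \ref{Thm3} for a one-point set — and the only point needing a moment's care is recognizing that "noncyclic" for $A\cup\{b^*\}$ is exactly the conjunction of the hypotheses "$A$ is $T$-invariant" and "$Tb^*=b^*$", and that the diameter requirement $\delta(B)\le dist(A,B)$ holds vacuously once $B$ is a singleton.
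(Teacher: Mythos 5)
Your proposal is correct and follows exactly the route the paper intends: specializing Theorem \ref{Thm3} to the pair $(A,\{b^*\})$, where the singleton is trivially spherically complete and $\delta(\{b^*\})=0\le dist(A,\{b^*\})$, with noncyclicity coming from $T$-invariance of $A$ and $Tb^*=b^*$. Your explicit uniqueness argument via strict contractivity is the standard one and matches the fact the paper cites, so there is nothing to add.
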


\section{Concluding remarks}

We have studied in this paper the problem of existence of the best proximity point for nonexpansive noncyclic mappings $T:A\cup B\rightarrow A\cup B$. We have shown that if $(A,B)$ is proximinal pair and $B$ is bounded such that $\delta(B)\leq dist(A,B)$, then there exists a proximity pair $(a^*,b^*)\in A_0\times B_0$ such that $d(a^*,Tb^*)=dist(A,B)$. 

\medskip
Now, if one want to deal with cyclic mapping $T:A\cup B\rightarrow A\cup B$ i.e., $T(A)\subseteq B$ and $T(B)\subseteq A$, the result is obvious with the assumption $\delta(B)\leq dist(A,B)$. Indeed, Let $a^*\in A_0$ then there exists $b^*\in B_0$ such that $d(a^*,b^*)=dist(A,B)$. By the strong inequality we get
$$d(a^*,Ta^*)\leq \max\{dist(A,B), d(b^*,Ta^*)\}\leq dist(A,B)$$
thus each element of $A_0$ is a best proximity point of $T$. Note that $T$ is not even assumed to be nonexpansive map.

\medskip
Before discussing another important observation, let's recall the following surprising result.

\begin{proposition}\cite[Proposition 1.2]{ref16}
Let $(M, d)$ be an ultrametric space and let $B$ be a closed ball with radius $r$. Then, the actual radius of $B$ is equal to the diameter of $B$.
\end{proposition}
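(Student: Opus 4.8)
The plan is to pin down what ``actual radius'' should mean and reduce everything to the strong triangle inequality. Write the given ball as $B = B(x_0, r) = \{x \in M : d(x_0, x) \le r\}$, and recall $\delta(B) = \sup\{d(x,y) : x, y \in B\}$. The key elementary observation, to be established first, is that for \emph{any} admissible description $B = B(c, s)$ one has $\delta(B) \le s$: indeed, for $x, y \in B$ the strong triangle inequality gives $d(x,y) \le \max\{d(x,c), d(c,y)\} \le s$, and taking the supremum over $x, y$ yields $\delta(B) \le s$. In particular $\delta(B) \le r$; the same computation with $c = x_0$ also shows $\sup_{x \in B} d(x_0, x) = \delta(B)$, since the reverse inequality $\delta(B)\le \sup_{x\in B} d(x_0,x)$ is again immediate from the strong triangle inequality and $\sup_{x\in B}d(x_0,x)\le\delta(B)$ holds trivially.

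Next I would show that $\delta(B)$ is itself an admissible radius, with \emph{every} point $a \in B$ a legitimate center: $B = B(a, \delta(B))$. The inclusion $B \subseteq B(a, \delta(B))$ is immediate, since $a$ and any $x \in B$ both lie in $B$, so $d(a, x) \le \delta(B)$. For $B(a, \delta(B)) \subseteq B$, take $x$ with $d(a,x) \le \delta(B)$; since $a \in B = B(x_0, r)$ we have $d(x_0, a) \le r$, hence $d(x_0, x) \le \max\{d(x_0,a), d(a,x)\} \le \max\{r, \delta(B)\} = r$ by the first step, so $x \in B(x_0,r) = B$.

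Putting the two steps together, $\delta(B)$ is admissible as a radius for $B$ while no strictly smaller value is, so the actual radius of $B$ --- whether read as the infimum of admissible radii over all choices of center, or as $\sup_{x \in B} d(x_0, x)$ for the nominal center $x_0$ --- equals $\delta(B)$. The only point that needs care is the inclusion $B(a,\delta(B)) \subseteq B$ in the second step, where one must feed in the a priori representation $B = B(x_0, r)$ together with the bound $\delta(B) \le r$ obtained in the first step; everything else is routine bookkeeping with the strong triangle inequality.
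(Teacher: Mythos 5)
Your argument is correct, but there is nothing in the paper to compare it against: this proposition is quoted from Dovgoshey's \emph{Finite ultrametric balls} (reference [16] of the paper) and is stated there without proof, being used only in the concluding remarks. Your proof is the standard one and is complete: for any representation $B\subseteq B(c,s)$ the strong triangle inequality gives $\delta(B)\le s$, so no radius smaller than $\delta(B)$ can describe $B$, while the second step shows $B=B(a,\delta(B))$ for every $a\in B$ (the familiar fact that each point of an ultrametric ball is a center), so $\delta(B)$ itself is attained as a radius; together with the identity $\sup_{x\in B}d(x_0,x)=\delta(B)$ this covers both reasonable readings of ``actual radius'' (infimum of admissible radii, or the supremum of distances from a center). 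The only point deserving care is exactly the one you flag, namely that the inclusion $B(a,\delta(B))\subseteq B$ uses the a priori bound $\delta(B)\le r$ from the first step; with that in place the argument is sound and is essentially the proof one would find in the cited source.
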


Let $B$ be a ball. Suppose that $A_0\neq \varnothing$ and  $\delta(B)> dist(A,B)$. Then, there exist $a^*\in A$ and $b^*\in B$ such that 
$$d(a^*,b^*)=dist(A,B).$$

Thus, $d(a^*,b^*)\leq \delta(B)$. By the previous proposition, we obtain $a^*\in B$. Then $A\cap B\neq \varnothing$, hence $dist(A,B)=0$. 

\medskip
Now if one assume that $B$ is no longer a ball we have the following example which shows that $dist(A,B)\neq 0$. This example is borrowed from Kirk and Shahzad in \cite{ref3}.

\begin{example}\label{Ex1}
Let $M = \{ a , b , c , d \}$ with $d ( a , b ) = d ( c , d ) = \frac{1}{2}$; $d ( a , c ) = d ( a , d ) = d ( b , c ) = d ( b , d ) = 1$. Then $( M , d )$ is a spherically complete ultrametric space. Define $A=\{a,c\}$ and $B=\{b,d\}$. Then $(A,B)$ is spherically complete pair with $\delta(B)>dist(A,B)$ and $A_0=\{a,c\}$. Note that $dist(A,B)\neq 0$.
\end{example}

%%%%%%%%%%%%%%%%%%%%%%%%%%%%%%%%%%%%%%%%%%%%%%%%%%%%%%%%
% \section*{Acknowledgements} The authors would like to thank 
%%%%%%%%%%%%%%%%%%%%%%%%%%%%%%%%%%%%%%%%%%%%%%%%%%%%%%%%%%%%Biblio

\bibliographystyle{plain}

\end{document}